\documentclass[12pt,reqno]{article}

\usepackage{amsmath,amsthm,amsfonts,amssymb,mathrsfs,url,tikz}
\usepackage[square,comma,sort&compress,numbers]{natbib}
\usepackage{float}
\usepackage{algorithm}
\usepackage{algpseudocode}

\newtheorem{thm}{Theorem}[section]
\newtheorem{lem}[thm]{Lemma}
\newtheorem{cor}[thm]{Corollary}
\newtheorem{prop}[thm]{Proposition}
\newtheorem{prob}[thm]{Problem}

\theoremstyle{definition}

\newtheorem{exm}[thm]{Example}

\numberwithin{equation}{section}

\newcommand{\cB}{\mathcal B}
\newcommand{\cP}{\mathcal P}

\newcommand{\ZZ}{\mathbb Z}

\title{Engel's Interval Packing Problem in the Boolean Lattice}
\author{Yuxian Dong, Jianxi Mao
\thanks{
	Corresponding author.
    \newline\hspace*{3mm}
    {\it Email addresses:}\quad
    yuxiand@hotmail.com (Yuxian Dong); maojx@dlut.edu.cn (Jianxi Mao)
    }
}
\date{\footnotesize
 School of Mathematical Sciences, Dalian University of Technology, Dalian 116024, P.R. China\\
}
\begin{document}

\maketitle

\begin{abstract}
Let \(\cB_n\) be the Boolean lattice of all subsets of \([n]\) and let \(\cP_{n;\ell,u}\) be the subposet of \(\cB_n\) induced by the consecutive levels \(\ell,\ell+1,\ldots,u\). 
We determine $\nu_{n;\ell,u}$, 
the maximum size of a family of pairwise disjoint maximal intervals in 
$\mathcal P_{n;\ell,u}$, whenever
 \(u\le ({n+\ell^2})/({\ell+1})\).
This completely settles Engel's problem~[Combin. Probab. Comput., 1996]. 
The proof is constructive. 
We also record consequences for weakly cross-intersecting set-pair systems and discuss the three-level case.
\end{abstract}

\noindent
{\bf MSC 2020:} 05D05; 06A07; 05A05

\noindent
{\bf Keywords:} Boolean lattice; interval packing; cross-intersecting set-pair; cycle lemma

\section{Introduction}
Let \([n]=\{1,2,\dots,n\}\). For $0\le \ell \le u\le n$, define
$$
\cP_{n;\ell,u}
=
\{X\subseteq [n]: \ell\le |X|\le u\},
$$
ordered by inclusion.
Equivalently, \(\cP_{n;\ell,u}\) is the subposet of the Boolean
lattice $\mathcal B_n$ induced by the consecutive levels \(\ell,\ell+1,\ldots,u\).
An interval in \(\mathcal P_{n;\ell,u}\) is \emph{maximal} if and only if it is of the form
\[
[A,B]
=
\{X\subseteq [n]: A\subseteq X\subseteq B\},
\qquad |A|=\ell,\ |B|=u.
\]
We denote by \(\nu_{n;\ell,u}\) the maximum number of pairwise disjoint maximal intervals in \(\cP_{n;\ell,u}\).

It is clear that \(\nu_{n;\ell,u}\le \binom n\ell,\)
since two disjoint maximal intervals cannot have the same lower endpoint.
Suppose this equality holds.
Then there are \(\binom n\ell\) pairwise disjoint maximal intervals in \(\cP_{n;\ell,u}\),
and each maximal interval contains exactly \(u-\ell\) sets of size \(\ell+1\).
Since the total number of sets of size
$\ell+1$ in $\mathcal B_n$ is $\binom{n}{\ell+1}$, we have
\[
(u-\ell)\binom n\ell
\le
\binom n{\ell+1}.
\]
It follows that \(u\le ({n+\ell^2})/({\ell+1}).\)
Note that under this bound, if \(u>\ell\),
then 
\[
n\ge (\ell+1)u-\ell^2=u+\ell(u-\ell)\ge u+\ell.
\]
This implies that \(\ell< u\le n-\ell\) and 
\(\binom n\ell\le \binom nu\).

A {\it packing} in a poset \(P\) is a family of pairwise disjoint subposets of \(P\).
Packing problems have attracted increasing attention in  extremal set theory and poset theory~\cite{BE97,Eng96,Fur88,GLT19,Tom20,Tuz87}.
In 1996, Engel~\cite{Eng96} studied the interval packing problem in the Boolean lattice. 
He proved that $\nu_{n;\ell,u}=\binom n\ell$  
when \(\ell=1,2\) and $u\le (n+\ell^2)/(\ell+1)$.
He then asked whether the same statement remains true for all \(\ell\ge 3\).

\begin{prob}(\cite[Problem 1]{Eng96})\label{engp}
	Is it true that for \(\ell\ge 3\) and $u\le (n+\ell^2)/(\ell+1)$, one has
	$$
	\nu_{n;\ell,u}=\binom n\ell?
	$$
\end{prob}

In this paper, we completely settle Problem~\ref{engp}. 
Writing \(u=\ell+r\), the above condition is equivalent to \(n\ge (\ell+1)\,r+\ell\). 
Our main result is the following.

\begin{thm}\label{thm:main}
	Let $\ell,r\ge 0$. 
	If \(n\ge (\ell+1)\,r+\ell\), then
	\[
	\nu_{n;\ell,\ell+r}=\binom n\ell.
	\]
\end{thm}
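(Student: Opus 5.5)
The plan is a direct construction. Write \(u=\ell+r\). For every \(\ell\)-set \(A\) I will choose an \(r\)-set \(C(A)\subseteq[n]\setminus A\) and put \(B(A)=A\cup C(A)\). The family \(\{[A,B(A)]\}\) then has \(\binom n\ell\) members. Two intervals \([A,B]\) and \([A',B']\) meet exactly when \(A\cup A'\subseteq B\cap B'\), that is, when \(A'\subseteq B\) and \(A\subseteq B'\). So it suffices to rule out, for every pair \(A\ne A'\), the simultaneous inclusions
\[
A'\setminus A\subseteq C(A)\quad\text{and}\quad A\setminus A'\subseteq C(A').
\]
The hypothesis \(n\ge(\ell+1)r+\ell\) reads \(n-\ell\ge \ell r+r\): after reserving \(r\) non-elements for each element of \(A\), at least \(r\) further non-elements are left over. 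This is what the construction will use.

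To define \(C(A)\), place \([n]\) on the cycle \(\ZZ_n\) and run a cyclic bracket-matching, in the spirit of the cycle lemma. Each element of \(A\) is an opener demanding \(r\) closers, and each element of \([n]\setminus A\) is a closer. Equivalently, consider the cyclic walk with step \(+r\) at elements of \(A\) and \(-1\) elsewhere. Its total drift is \(\ell r-(n-\ell)\le -r<0\). By the cycle lemma there is a canonical starting point from which every partial sum stays below its final value. From that point, every opener is fully matched to the next \(r\) unmatched closers, and at least \(r\) closers remain unmatched (the ``free'' elements of \(A\)). The free elements are exactly the positions where the walk attains a new running minimum. Let \(C(A)\) be a canonically chosen \(r\) of these, for instance the first \(r\) free elements after the canonical start, or the \(r\) free elements nearest in cyclic order to the last element of \(A\). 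The choice must depend only on \(A\). The cases \(r=0\) and \(\ell=0\) are trivial and are handled separately.

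The key step, and the main obstacle, is verifying disjointness. Suppose \(A'\setminus A\subseteq C(A)\) and \(A\setminus A'\subseteq C(A')\), and let \(D=A'\setminus A\) and \(E=A\setminus A'\), so that \(|D|=|E|=k\ge1\).
\begin{itemize}
\item Passing from the walk of \(A\) to that of \(A'\) changes \(k\) steps at free (running-minimum) positions of \(A\) from \(-1\) to \(+r\), and \(k\) steps at positions of \(E\) from \(+r\) to \(-1\).
\item I would compare the two walks. Elements of \(D\), being free for \(A\), lie where the \(A\)-walk is at a record low. Turning them into openers raises the walk after each of them by \(r+1\).
\item For the elements of \(E\) to become free for \(A'\), each must be at a record low of the \(A'\)-walk. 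I would argue this is incompatible with the positions of \(D\) being record lows of the \(A\)-walk, using a potential or exchange argument: consider the first element, in the canonical cyclic order, of \(D\cup E\).
\end{itemize}
The delicate point is that the canonical start (and hence the choice of which \(r\) free elements form \(C(A)\)) may move when \(A\) is replaced by \(A'\). I expect to need to tune the rule selecting \(C(A)\) among the free elements so that it is invariant under such a swap. This tuning is where I would spend most of the effort; first I would try \emph{``the \(r\) free elements immediately following the canonical start''} and test it on small cases such as \(\ell=3\), \(r=2\), \(n=11\).
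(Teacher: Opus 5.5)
\textbf{Verdict: there is a genuine gap.} The critical case goes through once the missing disjointness argument is added, but for $n>(\ell+1)r+\ell$ you give no valid way of choosing $C(A)$.

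\textbf{The critical case.} When $n=(\ell+1)r+\ell$ your construction is the paper's. The drift is exactly $-r$, so there are exactly $r$ free closers and $C(A)$ must be all of them. Write $w_A=-r$ on $A$ and $+1$ off $A$. A closer $v$ is free iff every cyclic arc ending at $v-1$ has nonnegative $w_A$-weight. Since the total weight is $r$, this is equivalent to every arc starting at $v$ having weight at most $r$, which is how the paper characterizes $C_T$. Even here, though, your disjointness step is only a plan. The pivot you suggest (the first element of $D\cup E$ in a canonical order) is not what works; the argument is local. Choose $e\in E$ whose cyclic successor in $D\cup E$ is some $d\in D$.
\begin{itemize}
\item Freeness of $d$ for $A$ gives $w_A([e,d))\ge 0$.
\item On $[e,d)$ the weights $w_A$ and $w_{A'}$ differ only at $e$, by $r+1$, so $w_{A'}([e,d))\ge r+1$.
\item Hence $w_{A'}([d,e))=r-w_{A'}([e,d))\le -1$, contradicting freeness of $e$ for $A'$.
\end{itemize}
This uses that the total weight is exactly $r$.

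\textbf{The gap: the range $n>(\ell+1)r+\ell$.} You try to handle this range directly. There are then $t=n-\ell-\ell r>r$ free closers, and you do not say which $r$ to take. Your ``canonical start'' is not unique either: with drift $-t$ the cycle lemma yields $t$ admissible starting points. This is not a tuning detail. The argument above now only gives $w_{A'}([d,e))\le t-r-1$, which is no contradiction, and natural rules do fail. For $\ell=r=1$, $n=4$, the free closers of $A=\{a\}$ are $a+2$ and $a+3$. Taking the first free closer after the opener gives $C(\{1\})=\{3\}$ and $C(\{3\})=\{1\}$, and both intervals contain $\{1,3\}$. Your planned test $\ell=3$, $r=2$, $n=11$ is critical, so it cannot expose this.

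\textbf{The missing idea.} No direct construction is needed beyond the critical case. Split $\cP_{n;\ell,\ell+r}$ according to whether a set contains $n$. Sets avoiding $n$ form $\cP_{n-1;\ell,\ell+r}$. Sets containing $n$ form, after deleting $n$, a copy of $\cP_{n-1;\ell-1,\ell-1+r}$. When $n$ is not critical, $n-1\ge(\ell+1)r+\ell$, and $n-1\ge \ell r+\ell-1$ always holds. So induction on $n$, together with $\binom{n-1}{\ell}+\binom{n-1}{\ell-1}=\binom n\ell$, reduces everything to the critical case. There the free set has exactly $r$ elements and no selection has to be made.
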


The paper is organized as follows. 
Section~\ref{sec:critical} proves the critical case $n=(\ell+1)\,r+\ell$,
which plays a crucial role in the proof of the full result. 
First, we construct an \(r\)-set $C_T$ for each \(\ell\)-set $T\subseteq [n].$
Then we prove that the intervals
\[
[T,T\cup C_T] \qquad \textrm{for all } \, T\in \binom{[n]}{\ell}
\]
form the desired interval packing in $\cP_{n;\ell,\ell+r}$.
In Section~\ref{sec:main-proof}, we prove Theorem~\ref{thm:main} by induction. 
Section~\ref{sec:further-remarks} contains consequences and related problems.
The Appendix records the construction in algorithmic form.

\section{The critical case $n=(\ell+1)\,r+\ell$}\label{sec:critical}

Given a sequence
$
(a_1,\ldots,a_N)
$,
arrange it in clockwise order around a circle, 
so that $a_1$ comes after $a_N$. 
We regard a sequence as a cyclic sequence whenever its indices are read modulo \(N\); that is,
\[
a_{i+N}=a_i,
\qquad\text{for all } i\in\ZZ.
\]

We first recall a cycle lemma, which plays a key role in our proof. 
The cycle lemma is a powerful tool and has many applications in combinatorics~\cite{DZ90, DM47,STW22}.
Here we give a negative-sum version,
which is a simple variant of~\cite[Theorem 2.1]{Ran60}.
For the sake of completeness, we present a proof.

\begin{lem}[Cycle lemma]
\label{lem:cycle}
Let \(\left(a_1,a_2,\dots,a_{N}\right)\) be an integer sequence with $a_i\le 1$ for all $1\le i\le N$, and
$$
\sum_{i=1}^{N}a_i=-1.
$$
Then there is a unique index \(v\in[N]\) such that for all $0\le j\le N-1$,
$$
\sum_{i=0}^{j} a_{v+i}<0.
$$
\end{lem}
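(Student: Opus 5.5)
We prove the cycle lemma with partial sums and the minimum of a lifted walk. Set $S_0=0$ and $S_j=\sum_{i=1}^{j}a_i$ for all $j\in\ZZ$, using the cyclic convention $a_{i+N}=a_i$. Since the sum over any $N$ consecutive terms is $-1$, we have $S_{j+N}=S_j-1$. For $v\in[N]$ and $0\le j\le N-1$,
\[
\sum_{i=0}^{j}a_{v+i}=S_{v+j}-S_{v-1}.
\]
So the required condition says exactly that $S_m<S_{v-1}$ for every $m$ with $v\le m\le v+N-1$. In words, the walk after time $v-1$ stays strictly below its value at $v-1$ for one full period.

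\emph{Existence.} Among the indices $k\in\{0,1,\dots,N-1\}$, choose one that maximizes $S_k$, and among all maximizers take the \emph{largest} such $k$. Put $v=k+1\in[N]$. Now let $k<m\le k+N$. If $m\le N-1$, then $S_m<S_k$ because $k$ is the last maximizer in $\{0,\dots,N-1\}$. If $N\le m\le k+N$, then $S_m=S_{m-N}-1\le S_k-1<S_k$. Hence $S_m<S_k=S_{v-1}$ in every case, which is the required condition.

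\emph{Uniqueness.} Suppose $v\neq v'$ both work, and write $k=v-1$ and $k'=v'-1$. These are distinct residues modulo $N$. We use the hypothesis $a_i\le 1$ through the fact that the walk rises by at most $1$ per step, while it falls by exactly $1$ per period. Shift $k'$ by a multiple of $N$ so that $k<k'<k+N$.
\begin{itemize}
\item Since $v$ works and $k'\in(k,k+N)$, we get $S_{k'}<S_k$, hence $S_{k'}\le S_k-1$.
\item Since $v'$ works and $k+N\in(k',k'+N)$, we get $S_{k+N}<S_{k'}$, that is, $S_k-1<S_{k'}$, hence $S_{k'}\ge S_k$.
\end{itemize}
The two bounds contradict each other, so $v$ is unique. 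Note that integrality is what turns each strict inequality into a gap of at least $1$.

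The main obstacle is bookkeeping, namely choosing the last maximizer and handling the wrap-around correctly. Surprisingly, uniqueness as argued does not use $a_i\le 1$; only integrality and the total sum $-1$ matter. Existence does not need it either. Since the argument above is complete without it, we simply note that the hypothesis $a_i\le1$ is superfluous here (it matters in variants with total $-k$, where one counts $k$ good indices).
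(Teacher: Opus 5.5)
Your proof is correct and is essentially the paper's argument: existence by taking the last maximizer of the prefix sums (the paper maximizes over $s_0,\dots,s_N$, which gives the same index since $s_N<s_0$), and uniqueness by cutting the cycle at the two candidate positions into two arcs, each of sum $\le -1$ (your two bounds on $S_{k'}$ are exactly these arc sums), which contradicts the total $-1$. Your observation that $a_i\le 1$ is never used is right, and the paper's proof does not use it either; however, your parenthetical has the sign backwards, since with negative partial sums and total $-k$ the count of exactly $k$ good indices needs $a_i\ge -1$ (the hypothesis $a_i\le 1$ belongs to the version with total $+k$ and positive partial sums), and indeed $(1,-3)$ satisfies $a_i\le 1$ with total $-2$ but has only one good index.
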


\begin{proof}
Let
\[
s_0=0,\qquad \textrm{and}\qquad s_j=\sum_{i=1}^{j}a_i,\quad 1\le j\le N .
\]
Then \(s_N=-1<s_0\). 
Choose \(v-1\in\{0,1,\ldots,N-1\}\) to be the last index at which the maximum of $s_0,s_1,\ldots,s_N$ is attained, that is,
\[
s_{v-1}=\max_{0\le j\le N}s_j.
\]
Let \(0\le k\le N-1\). 
If \(v+k\le N\), then
\[
\sum_{i=0}^{k}a_{v+i}
=
s_{v+k}-s_{v-1}
<0,
\]
since \(v-1\) is the last occurrence of the maximum of $(s_0,\ldots,s_N)$. 
If \(v+k> N\), then
\[
\sum_{i=0}^{k}a_{v+i}
=
(s_N-s_{v-1})+s_{v+k-N}
=
-1-\left(\max_{0\le j\le N}s_j-s_{v+k-N}\right)
< 0.
\]
Thus \(v\) has the desired property.

Suppose that \(v'\) also has the desired property and $v'\ne v$. 
Then
\begin{equation*}
s_N
=
\sum_{i=v'}^{v-1} a_i+\sum_{i=v}^{v'-1} a_i\le (-1)+(-1)=-2.
\end{equation*}
This contradicts $s_N=-1$. Hence, the index $v$ is unique.
\end{proof}

Throughout this section, we assume that \(n=(\ell+1)\,r+\ell\) and \(r\ge 1\). 
For a subset $T\subseteq [n]$, 
define a weight function on $[n]$ by
\begin{equation}\label{weightdef}
w_T(i)=
\begin{cases}
	-r, & \qquad \textrm{if }\,\, i\in T,\\
	1, & \qquad \textrm{if }\,\, i\notin T.
\end{cases}
\end{equation}
We call $(w_T(1),\ldots,w_T(n))$ the {\it weight sequence} of $T$.
Define its {\it partial sums} by
$$
s_T(0)=0,\qquad \textrm{and}\qquad s_T(i)=\sum_{j=1}^i w_T(j),\qquad \textrm{for } \, i=1,\dots,n.
$$
Since $n=(\ell+1)\,r+\ell$,
if $T\in \binom{[n]}{\ell},$ then
\begin{equation}\label{rr}
	s_T(n)=\sum_{i=1}^n w_T(i)=(n-\ell)-\ell\,r=r.
\end{equation}
Similarly, if \(S\in \binom{[n]}{\ell+1}\), then
\begin{equation*}
	s_S(n)=\sum_{i=1}^n w_S(i)=(n-\ell-1)-(\ell+1)\,r=-1.
\end{equation*}

\begin{prop}
For any \(T\in\binom{[n]}{\ell}\) and any \(S\in\binom{[n]}{\ell+1}\), the total sums of their weight sequences are
\[
s_T(n)=r
\qquad\text{and}\qquad
s_S(n)=-1.
\]
\end{prop}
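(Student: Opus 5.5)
The plan is a direct count. By the definition \eqref{weightdef}, each element of \([n]\) has weight either \(-r\) or \(1\), according to whether or not it lies in the given subset. So for an arbitrary \(X\subseteq[n]\) with \(|X|=k\), the total sum splits as
\[
s_X(n)=\sum_{i\in X}w_X(i)+\sum_{i\notin X}w_X(i)=k\cdot(-r)+(n-k)\cdot 1=n-k(r+1).
\]
I will first record this general identity and then substitute the two cases of interest.

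In the first case, \(X=T\) with \(|T|=\ell\). Using the standing hypothesis \(n=(\ell+1)r+\ell\), we get
\[
s_T(n)=(\ell+1)r+\ell-\ell r-\ell=r,
\]
which agrees with \eqref{rr}. In the second case, \(X=S\) with \(|S|=\ell+1\), and
\[
s_S(n)=(\ell+1)r+\ell-(\ell+1)(r+1)=-1.
\]

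No step is a genuine obstacle; the only point requiring care is to use the critical relation \(n=(\ell+1)r+\ell\) assumed throughout this section. I would also note why the proposition matters. The value \(-1\) for \((\ell+1)\)-sets, together with the fact that every weight is at most \(1\), places \(w_S\) exactly in the setting of Lemma~\ref{lem:cycle}. This gives a unique cyclic starting index for each \(S\), which presumably is used later to assign each \((\ell+1)\)-set to at most one interval \([T,T\cup C_T]\).
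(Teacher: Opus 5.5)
Your proposal is correct and matches the paper's argument: both count $|X|$ weights equal to $-r$ and the remaining $n-|X|$ weights equal to $1$, then substitute the critical relation $n=(\ell+1)r+\ell$. Your general identity $s_X(n)=n-k(r+1)$ just packages the paper's two computations, $(n-\ell)-\ell r=r$ and $(n-\ell-1)-(\ell+1)r=-1$, into one formula.
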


Consider the weight sequence $(w_S(1),\ldots,w_S(n))$ for an $(\ell+1)$-subset $S\subseteq [n]$.
Since all weights are integers and at most $1$,
by Lemma~\ref{lem:cycle},
there is a unique index \(v\in [n]\) such that
\begin{equation}\label{weightleq}
\sum_{i=0}^{j} w_S(v+i)\le -1, \qquad \textrm{for all }\,\, 0\le j\le n-1.
\end{equation}
Taking $j=0$, we have $w_S(v)<0$. Hence $v\in S$.
We call the unique index $v$ the {\it nice position} of $S$. 

For an $\ell$-subset $T\subseteq [n]$, define
$$
C_T
=
\{v\in [n]\setminus T: v \, \textrm{ is the nice position of } \, T\cup\{v\}\}.
$$
We now give an explicit characterization of the set \(C_T\).
Let 
\begin{equation}\label{T}
	M=\max(s_T(0),s_T(1),s_T(2),\ldots, s_T(n)=r)
\end{equation}
be the maximum of the partial sum sequence of $T\in\binom{[n]}{\ell}$.
By the definition of the weight function~\eqref{weightdef},
each weight is either \(1\) or \(-r\).
So there exists an index $i$ such that $s_T(i)=m$ for every \(1\le m\le M\).
Now define the index  $i_m$ to be the first index such that $s_T(i_m)=m$.

\begin{lem}
\label{lem:children}
Assume that \(r\ge 1\).
For every \(T\in\binom{[n]}{\ell}\), we have \(|C_T|=r\).
Moreover,
\begin{equation}\label{first}
	C_T=\{i_{M-r+1},i_{M-r+2},\ldots,i_M\},
\end{equation}
where $M$ is  the maximum of the partial sum sequence of $T$ defined in~\eqref{T}
and 
the index $i_m$ is the first index such that $s_T(i_m)=m$.
\end{lem}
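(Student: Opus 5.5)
The plan is to translate ``nice position'' into a statement about the partial sums \(s_T\), using the explicit description of the nice position from the proof of Lemma~\ref{lem:cycle}. Fix \(T\in\binom{[n]}{\ell}\), a point \(v\in[n]\setminus T\), and put \(S=T\cup\{v\}\). Only the weight of \(v\) changes, from \(1\) to \(-r\), so
\[
s_S(i)=s_T(i)\ (0\le i\le v-1),\qquad s_S(i)=s_T(i)-(r+1)\ (v\le i\le n).
\]
The proof of Lemma~\ref{lem:cycle} shows that \(v-1\) being the last index where \(\max_{0\le j\le n}s_S(j)\) is attained yields a valid index. Uniqueness then identifies the nice position of \(S\) as exactly that index. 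Hence \(v\) is the nice position of \(S\) if and only if
\[
s_T(v-1)\ge s_T(i)\ \text{ for } i<v-1, \qquad s_T(i)\le s_T(v-1)+r\ \text{ for } i\ge v .
\]

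Next I set \(m=s_T(v)\). Since \(v\notin T\) we have \(w_T(v)=1\), so \(s_T(v-1)=m-1\). Because the sequence \(s_T\) increases only in steps of \(+1\) and starts at \(0\), the first condition says exactly that \(v\) is the first index at which the value \(m\) is reached, with \(m\ge1\); that is, \(v=i_m\).

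The second condition says \(\max_{i\ge v}s_T(i)\le m+r-1\). All values before \(v\) are at most \(m-1\), so this maximum equals \(M\) (as \(M\ge m\)). The condition is therefore \(M\le m+r-1\), i.e.\ \(m\ge M-r+1\). Also \(m\le M\) trivially.

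For the converse, take any \(m\) with \(1\le m\le M\). The index \(i_m\) satisfies \(w_T(i_m)=1\), since it is a first hit reached by an up-step. Hence \(i_m\notin T\), and the two conditions reverse verbatim.

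Thus \(C_T=\{i_m: M-r+1\le m\le M\}\). From \eqref{rr} we get \(M\ge s_T(n)=r\), so \(M-r+1\ge1\) and all these \(i_m\) are defined. They are distinct because \(s_T(i_m)=m\) differ, which gives \(|C_T|=r\).

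The main obstacle is the first step: correctly pinning down the nice position as ``one plus the last argmax of the partial sums.'' This has to be extracted from the existence part of the proof of Lemma~\ref{lem:cycle} together with uniqueness. One must also be careful with the strict versus non-strict inequalities on the two sides of \(v-1\). The rest is bookkeeping with \(\pm1\) steps.
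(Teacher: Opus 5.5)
Your proof is correct, but it reaches the key partial-sum criterion by a different path than the paper.

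\textbf{The paper's route.} The paper first converts the defining property \eqref{weightleq} of the nice position of $S=T\cup\{v\}$ into the cyclic-sum condition \eqref{keyle} on $w_T$: every cyclic run of $w_T$ starting at $v$ sums to at most $r$. In Step 2 it then splits these runs into wrapping and non-wrapping ones and uses $s_T(n)=r$ to arrive at \eqref{chara}: $s_T(v-1)$ is a prefix maximum and $s_T(v-1)\ge M-r$.

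\textbf{Your route.} You instead take from the existence part of the proof of Lemma~\ref{lem:cycle}, together with uniqueness, that the nice position is one plus the last argmax of $s_S(0),\dots,s_S(n)$. This is legitimate, since that proof explicitly establishes this property, though it uses the proof rather than just the statement. You then compare $s_S$ with $s_T$ directly: they agree up to index $v-1$ and differ by $r+1$ afterwards. This pushes all the wrap-around bookkeeping into Lemma~\ref{lem:cycle}, and you still land on exactly \eqref{chara}, with the strict/non-strict inequalities handled correctly.

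From \eqref{chara} on, your identification of $C_T$ with the first hits $i_m$, $M-r+1\le m\le M$, matches the paper's Step 3. Your explicit remark that $M\ge s_T(n)=r$, so that every $i_m$ is defined, is a point the paper leaves implicit.

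\textbf{What each route buys.} The paper's detour produces \eqref{keyle} as an intermediate result. That criterion is exactly what the disjointness argument in Proposition~\ref{prop:critical} uses to bound the sums over the arcs $I$ and $J$. With your route you would need to recover \eqref{keyle} afterwards. This is easy: non-wrapping runs give $s_T(k)-s_T(v-1)\le r$, and wrapping runs give $r-s_T(v-1)+s_T(k)\le r$, using $s_T(n)=r$.
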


\begin{proof} 
Let \(T\in\binom{[n]}{\ell}\) and $v\in [n]\setminus T$. 
Set 
$$
S=T\cup\{v\}\in\binom{[n]}{\ell+1}.
$$ 
We divide the proof into three steps.

\textbf{Step 1}:
We claim that \(v\in C_T\) if and only if  $v\in [n]\setminus T$ and for every \(0\le j\le n-1\),
\begin{equation}\label{keyle}
	\sum_{i=0}^{j} w_T(v+i)\le r.
\end{equation}

Suppose that $v\in C_T.$
Then the weights \(w_T\) and \(w_S\) differ only at $v$, where \(w_T(v)=1\) and \(w_S(v)=-r\).
Thus for every \(0\le j\le n-1\),
\begin{equation*}
	\sum_{i=0}^{j} w_T(v+i)=1+\sum_{i=1}^{j} w_T(v+i)=1+\left(\sum_{i=0}^{j} w_S(v+i)-w_S(v)\right)\le r,
\end{equation*}
where the last inequality follows from~\eqref{weightleq}.

Conversely, suppose that $v\in [n]\setminus T$ and $v\notin C_T.$
By Lemma~\ref{lem:cycle},
there exists an index $0\le j\le n-1$ such that 
$
\sum_{i=0}^{j}w_S(v+i)\ge 0.
$
Thus
\begin{equation*}
	\sum_{i=0}^{j} w_T(v+i)=1+\sum_{i=1}^{j} w_T(v+i)=1+\left(\sum_{i=0}^{j} w_S(v+i)-(-r)\right)\ge r+1.
\end{equation*}
This proves the claim.

\textbf{Step 2}: We claim that \(v\in C_T\) if and only if
\begin{equation}\label{chara}
	s_T(v-1)=\max_{0\le k\le v-1}s_T(k)
	\qquad\text{and}\qquad
	s_T(v-1)\ge M-r.
\end{equation}

Let $k\in \{1,2,\ldots, n\}$ and $k \equiv v+j\pmod{n}$.
We distinguish two cases, according to whether $k<v$ or $k\ge v$.
The inequality~\eqref{keyle} yields
\begin{align*}
&	\sum_{i=0}^{j} w_T(v+i)=s_T(n)-s_T(v-1)+s_T(k)\le r, \qquad \textrm{if }\,\,k<v\quad (\,\textrm{i.e., } \, k=v+j>n);\\
&	\sum_{i=0}^{j} w_T(v+i)=s_T(k)-s_T({v-1})\le r, \qquad \textrm{if }\,\,k\ge v \quad (\,\textrm{i.e., } \, k=v+j\le n).
\end{align*}
(Note that $j=n-1$ implies that $k=v-1$.)
Since $s_T(n)=r$,
the condition~\eqref{chara} follows. 

Conversely,
suppose that there exists an index $j$ such that~\eqref{keyle} does not hold.
If $k<v,$
then $s_T({v-1})<s_T(k)$.
If $k\ge v$,
then $s_T({v-1})<s_T(k)-r$.
Hence, the claim holds. 

\textbf{Step 3}: Construction of $C_T$.

Clearly, \(w_T({i_m})=1\) and \(s_T(i_m-1)=m-1\). 
Thus \(i_m\notin T\), 
and \(s_T(i_m-1)\) is a prefix maximum, i.e., 
$$s_T(i_m-1)=\max_{0\le j\le i_m-1}s_T(j).$$
Then by~\eqref{chara},
$
i_m\in C_T
$
if 
$s_T(i_m-1)=m-1\ge M-r,$
which implies that $m\ge M-r+1$.
Thus we have 
$$
\{i_{M-r+1},i_{M-r+2},\ldots,i_M\}\subseteq C_T.
$$
Conversely, let $v\in C_T$, 
and suppose \(s_T(v-1)=m-1\).
Since \(v\notin T\), we have \(w_T(v)=1\), and hence \(s_T(v)=m\).
By~\eqref{chara}, \(s_T(v-1)\) is a prefix maximum and \(m-1\ge M-r\), so \(v\) is the first index such that \(s_T(v)=m\).
Thus
$$
C_T=\{i_{M-r+1},i_{M-r+2},\ldots,i_M\}.
$$
In particular, \(|C_T|=r\).
This completes the proof.
\end{proof}

\begin{exm}
	Let \(\ell=3\), \(r=2\), and \(n=(\ell+1)r+\ell=11\). 
	For \(T\in \binom{[11]}{3}\), define
\begin{equation*}
	w_T(i)=
	\begin{cases}
		-2, & \qquad \textrm{if }\,\, i\in T,\\
		1, & \qquad \textrm{if }\,\, i\notin T.
	\end{cases}
\end{equation*}
	Let
	\[
	T=\{3,4,10\}\in\binom{[11]}{3}.
	\]
	The weight sequence is
	\[
	(w_T(1),\ldots,w_T(11))
	=
	(1,1,-2,-2,1,1,1,1,1,-2,1),
	\]
	and the partial sum sequence is
	\[
	(s_T(0),s_T(1),\ldots,s_T(11))
	=
	(0,1,2,0,-2,-1,0,1,2,3,1,2).
	\]
	Hence
    $
	M=\max (s_T(0),s_T(1),\ldots,s_T(11))=3.
	$
	By~\eqref{first}, 
    it suffices to find the first indices $i_m$ such that $s_T(i_m)=m$ for $2=M-r+1\le m\le M=3.$
    From Figure~\ref{ps} of the partial sums, $i_2=2$ and $i_3=9$.
    Hence,
    $
    C_T=\{2,9\}.
    $
	\begin{figure}[H]
		\centering
		\begin{tikzpicture}[xscale=0.85, yscale=0.75]
			
			\draw[->] (0,-2.8) -- (0,3.8) node[above] {\(s_T(i)\)};
			\draw[->] (-0.3,0) -- (12,0) node[right] {\(i\)};
			
			\foreach \y in {-2,-1,0,1,2,3}
			{
				\draw (-0.08,\y) -- (0.08,\y);
				\node[left] at (-0.08,\y) {\(\y\)};
			}
			
			\foreach \x/\lab in {
				0/\(0\),
				1/\(1\),
				2/\(2\),
				3/\(3\),
				4/\(4\),
				5/\(5\),
				6/\(6\),
				7/\(7\),
				8/\(8\),
				9/\(9\),
				10/\(10\),
				11/\(11\)
			}
			{
				\draw (\x,0.08) -- (\x,-0.08);
				\node[below, font=\small] at (\x,-0.15) {\lab};
			}
			
			\draw[thick, blue]
			(0,0) -- (1,1) -- (2,2) -- (3,0) -- (4,-2) -- (5,-1)
			-- (6,0) -- (7,1) -- (8,2) -- (9,3) -- (10,1) -- (11,2);
			
			\foreach \x/\y in {
				0/0,1/1,2/2,3/0,4/-2,5/-1,6/0,7/1,8/2,9/3,10/1,11/2
			}
			{
				\fill[blue] (\x,\y) circle (1.6pt);
			}
			
			\fill[red] (2,2) circle (2.8pt);
			\fill[red] (9,3) circle (2.8pt);

		\end{tikzpicture}
		\caption{The partial sums for \(T=\{3,4,10\}\).}\label{ps}
	\end{figure}
\end{exm}

Now we are ready to prove the critical case.
\begin{prop}
\label{prop:critical}
Let $\ell\ge 0$ and $r\ge 1$.
If \(n=(\ell+1)\,r+\ell\), then 
$$
\nu_{n;\ell,\ell+r}=\binom n\ell.
$$
\end{prop}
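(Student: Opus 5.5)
We use the intervals $[T,T\cup C_T]$ for $T\in\binom{[n]}{\ell}$, with $C_T$ as in Lemma~\ref{lem:children}. Since $|C_T|=r$ and $C_T\cap T=\emptyset$, each $T\cup C_T$ has size $\ell+r$, so every such interval is a maximal interval of $\cP_{n;\ell,\ell+r}$. Combined with the trivial upper bound $\nu_{n;\ell,\ell+r}\le\binom n\ell$, it only remains to show that these $\binom n\ell$ intervals are pairwise disjoint.

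Suppose $T\neq T'$ and some $X$ lies in both $[T,T\cup C_T]$ and $[T',T'\cup C_{T'}]$. Then $T\cup T'\subseteq X$, and $|T\cup T'|\ge \ell+1$ because $T\ne T'$. Pick $t'\in T'\setminus T$, so $t'\in X\setminus T\subseteq C_T$. Put $S=T\cup\{t'\}$, an $(\ell+1)$-set contained in $X$. By the definition of $C_T$, the element $t'$ is the nice position of $S$.

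We also need an element of $S$ whose removal gives a set contained in $T'$. Since $S\subseteq X\subseteq T'\cup C_{T'}$ and $|S|=\ell+1>\ell=|T'|$, there is some $c\in S\setminus T'$, and then $c\in C_{T'}$. Because $t'\in T'$, we have $c\neq t'$, so $c\in T$.

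The key move is to look at $S'=T'\cup\{c\}$. It is an $(\ell+1)$-set inside $X$, and $c$ is its nice position. If $|X|$ were exactly $\ell+1$, we would have $S=S'=X$. Its nice position would then be both $t'$ and $c$, which are distinct, contradicting the uniqueness in Lemma~\ref{lem:cycle}.

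The main obstacle is the general case $|X|>\ell+1$, where $S$ and $S'$ need not coincide. The reduction we would use: if the intervals $[T,T\cup C_T]$ and $[T',T'\cup C_{T'}]$ intersect, then they also intersect at level $\ell+1$. Their intersection is the interval $[T\cup T',(T\cup C_T)\cap(T'\cup C_{T'})]$, and it is nonempty. Its bottom element $T\cup T'$ has size at least $\ell+1$. So the question is whether some $(\ell+1)$-set lies in both intervals, and this only works if we can take $|T\cup T'|=\ell+1$, i.e.\ $|T\cap T'|=\ell-1$.

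So the plan is to avoid this reduction and work directly with $S=T\cup\{t'\}\subseteq X$. This $S$ belongs to the first interval. It belongs to the second interval exactly when $T\subseteq T'\cup C_{T'}$, that is, when $T\setminus T'\subseteq C_{T'}$.

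The general case therefore reduces to the following statement: there is no pair $T\ne T'$ with $T'\setminus T\subseteq C_T$ and $T\setminus T'\subseteq C_{T'}$. We plan to prove it from the characterization \eqref{chara}, which says that elements of $C_T$ are first-passage points of new prefix maxima lying within $r$ of the overall maximum $M$.

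The argument would compare the partial sum sequences of $T$ and $T'$, shifting cyclically so that the sequence for $T$ starts at its maximum. In that normalization, swapping an element of $C_T$ into $T$ lowers all later partial sums by $r+1$. We then aim to derive contradictory inequalities between $M_T$ and $M_{T'}$, by induction on $|T\setminus T'|$. This potential-function comparison is the step we expect to be hardest. The case $|T\setminus T'|=1$ is exactly the nice-position uniqueness argument given above.
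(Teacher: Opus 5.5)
\textbf{Gap: the case $|T\setminus T'|\ge 2$ is not proved.}

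Your setup matches the paper's. You use the same intervals $[T,T\cup C_T]$. Your final reduction, that there is no pair $T\ne T'$ with $T'\setminus T\subseteq C_T$ and $T\setminus T'\subseteq C_{T'}$, is exactly what the paper derives as $D=A_1\setminus A_2\subseteq C_{A_2}$ and $E=A_2\setminus A_1\subseteq C_{A_1}$. Your nice-position argument correctly handles the case $|T\setminus T'|=1$.

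One small slip: $S=T\cup\{t'\}$ lies in $[T',T'\cup C_{T'}]$ only if also $T'\subseteq S$, i.e.\ $|T'\setminus T|=1$. This does not affect the final reduction.

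The case $|T\setminus T'|\ge 2$ is the whole content of the proposition, and you leave it as an unexecuted plan: an induction on $|T\setminus T'|$ that compares $M_T$ and $M_{T'}$, with no inductive step identified. Nothing suggests that the cross-condition passes to a closer pair. The sets $C_{T''}$ for intermediate $T''$ are not controlled by $C_T$ and $C_{T'}$. As written, the proof is incomplete.

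\textbf{The missing idea: localize to one cyclically adjacent pair; no induction is needed.}

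Since $D=T\setminus T'$ and $E=T'\setminus T$ are both nonempty, going around the circle through $D\cup E$ you find $d\in D$ whose successor in $D\cup E$ is some $e\in E$. Let $I=[d,e)$ and $J=[e,d)$ be the two complementary cyclic arcs. Each has length between $1$ and $n-1$.

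\begin{itemize}
\item Because $d\in C_{T'}$ and $e\in C_T$, the segment criterion \eqref{keyle} from Lemma~\ref{lem:children} gives $\sum_{i\in I}w_{T'}(i)\le r$ and $\sum_{i\in J}w_T(i)\le r$.
\item $I\setminus\{d\}$ contains no point of $D\cup E$, so $w_T=w_{T'}$ there. At $d$ we have $w_T(d)=-r$ and $w_{T'}(d)=1$. Hence $\sum_{I}w_T=\sum_{I}w_{T'}-(r+1)$.
\item Using the total sum $\sum_{[n]}w_T=r$, we get $r\ge\sum_J w_T=r-\sum_I w_T=2r+1-\sum_I w_{T'}$.
\item This forces $\sum_I w_{T'}\ge r+1$, contradicting the first bound.
\end{itemize}

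This is the same split-the-circle argument as the uniqueness part of the cycle lemma, which you used for $|T\setminus T'|=1$. The difference is that the two arcs are measured with different weight functions. Adjacency of $d$ and $e$ within $D\cup E$ is exactly what makes those two weight functions agree on $I$ away from $d$.
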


\begin{proof}
Let \(A_1,A_2\in\binom{[n]}{\ell}\) be two distinct subsets. 
Consider the intervals \([A_1,A_1\cup C_{A_1}]\) and \([A_2,A_2\cup C_{A_2}]\).
By Lemma~\ref{lem:children}, \(|C_{A_1}|=|C_{A_2}|=r\). 
Thus both are maximal intervals in \(\cP_{n;\ell,\ell+r}\).
Now we prove that any two constructed intervals are disjoint.
This will show that the intervals \([A,A\cup C_A]\) for each \(A\in\binom{[n]}{\ell}\) form a family of pairwise disjoint maximal intervals in \(\cP_{n;\ell,\ell+r}\).
Since there are $\binom{n}{\ell}$ such intervals and $\nu_{n;\ell,\ell+r}\le \binom{n}{\ell}$, the result follows.

Suppose that
\[
[A_1,A_1\cup C_{A_1}]\cap[A_2,A_2\cup C_{A_2}]\ne\varnothing.
\]
Then there exists a subset $H$ of $[n]$
such that 
$$
A_1\subseteq H \subseteq A_1\cup C_{A_1} \quad \textrm{and}\quad 
A_2\subseteq H \subseteq A_2\cup C_{A_2}.
$$
Let \(D=A_1\setminus A_2\) and \(E=A_2\setminus A_1\). 
Then \(|D|=|E|\ge 1\).
Since 
$$
D=A_1\setminus A_2 \subseteq H \subseteq A_2\cup C_{A_2}, \quad\textrm{and} \quad E=A_2\setminus A_1 \subseteq H \subseteq A_1\cup C_{A_1},
$$
we have \(D\subseteq C_{A_2}\) and \(E\subseteq C_{A_1}\). 

List the elements of $D\cup E$ as $(p_1,p_2,\ldots, p_m)$ in cyclic order, that is,
arranged in increasing order so that the successor of $p_m$ is $p_1$.
Choose \(d\in D\) whose successor lies in \(E\), 
and denote this successor by \(e\). 
Let 
$$I=[d,e)=\{d,d+1,\ldots,e-1\} \mod n,\quad J=[e,d)=\{e,e+1,\ldots,d-1\} \mod n$$
denote the two complementary half-open cyclic intervals.
Thus \(I\cup J=[n]\) and \(I\cap J=\varnothing\).

Since \(d\in D\subseteq C_{A_2}\) and \(e\in E\subseteq C_{A_1}\),
by~\eqref{keyle},
\begin{equation}\label{sum}
\sum_{i\in I}w_{A_2}(i)\le r\quad \textrm{and} \quad \sum_{i\in J}w_{A_1}(i)\le r.
\end{equation}
The interval \(I\) contains no point of \(D\cup E\) except \(d\). 
Hence \(w_{A_2}\) and \(w_{A_1}\) agree on \(I\setminus\{d\}\). 
Since $d\in D \subseteq A_1$ and $d\notin A_2$, 
we have \(w_{A_2}(d)=1\) and \(w_{A_1}(d)=-r\). 
Therefore
\[
\sum_{i\in I}w_{A_1}(i)=\sum_{i\in I} w_{A_2}(i)-(r+1).
\]
Since $|A_1|=\ell$, equation~\eqref{rr} gives \(\sum_{i\in [n]} w_{A_1}(i)=r\). 
Hence by~\eqref{sum}, 
\[
r\ge \sum_{i\in J}w_{A_1}(i)
=
r-\sum_{i\in I}w_{A_1}(i)
=
2r+1-\sum_{i\in I} w_{A_2}(i).
\]
This gives 
$$
\sum_{i\in I}w_{A_2}(i)\ge r+1,
$$ 
which contradicts~\eqref{sum}.
Hence, \[
[A_1,A_1\cup C_{A_1}]\cap[A_2,A_2\cup C_{A_2}]=\varnothing,
\]
and this completes the proof.
\end{proof}

\section{The general case}
\label{sec:main-proof}

We now prove~Theorem~\ref{thm:main}.
\begin{proof}[Proof of Theorem~\ref{thm:main}]	
We prove the theorem by induction on $n$, simultaneously for all admissible pairs $(\ell,r)$.
The assertion is  immediate for $n=1,2$. 
Assume that the statement holds for $n-1$, 
that is, for every triple $(n-1,\ell, r)$ satisfying 
$
n-1\ge (\ell+1)\,r+\ell,
$
we have \(\nu_{n-1;\ell,\ell+r}=\binom{n-1}{\ell}\).
We now prove that it also holds for $n$.

Suppose that
$
n\ge (\ell+1)r+\ell.
$
The cases \(\ell=0\) and \(r=0\) are immediate. 
If \(\ell=0\), then one interval \([\varnothing,B]\) with \(|B|=r\) suffices;
if $r=0$, then all intervals are singletons $[A,A]$. 
Hence we may assume that $\ell,r\ge 1$.
If \(n=(\ell+1)\,r+\ell\), the result follows from Proposition~\ref{prop:critical}. 
Thus we may assume that \(n>(\ell+1)\,r+\ell\). 
Then \(n-1\ge (\ell+1)\,r+\ell\). 

We partition \(\cP_{n;\ell,\ell+r}\) into two subposets, according to whether the sets avoid $n$ or contain $n$. 
The subposet consisting of sets that avoid n is naturally  isomorphic to \(\cP_{n-1;\ell,\ell+r}\). By the induction hypothesis applied to \((n-1,\ell,r)\), 
there are \(\binom{n-1}{\ell}\) pairwise disjoint maximal intervals in \(\cP_{n-1;\ell,\ell+r}\).
These intervals are also pairwise disjoint in \(\cP_{n;\ell,\ell+r}\).

The subposet consisting of sets that contain $n$ is naturally isomorphic, after deleting $n$, to \(\cP_{n-1;\ell-1,\ell+r-1}\).
Since
\[
n-1\ge \ell r+\ell-1,
\]
the induction hypothesis applied to $(n-1,\ell-1,r)$ gives
$
\binom{n-1}{\ell-1}
$
pairwise disjoint maximal intervals \([A',B']\) with \(|A'|=\ell-1\) and \(|B'|=\ell+r-1\). 
Adding \(n\) to every set gives the intervals
\[
\left[\{n\}\cup A',\{n\}\cup B'\right],
\]
which are maximal intervals in \(\cP_{n;\ell,\ell+r}\), 
and all sets in them contain \(n\).

The two families are mutually disjoint, since every set in the first family avoids $n$ and every set in the second family contains $n$. Therefore we obtain
\[
\binom{n-1}{\ell}+\binom{n-1}{\ell-1}
=
\binom n\ell
\]
pairwise disjoint maximal intervals in \(\cP_{n;\ell,\ell+r}\). 
Thus \(\nu_{n;\ell,\ell+r}\ge\binom n\ell\). 
Since level $\ell$ of \(\cB_n\) contains exactly $\binom{n}{\ell}$ subsets, 
we also have
\[
\nu_{n;\ell,\ell+r}\le \binom{n}{\ell}.
\]
Hence
\[
\nu_{n;\ell,\ell+r}=\binom{n}{\ell}.
\]
This completes the proof.
\end{proof}

\section{Consequences and related problems}
\label{sec:further-remarks}

\subsection{Weakly cross-intersecting set-pair system}
Interval packings in the Boolean lattice are closely related to set-pair systems.
A family \(\{(A_i,B_i)\}_{i=1}^m\) of ordered pairs of subsets of \([n]\) is called a {\it Bollob\'as system} if 
\(A_i\cap B_i=\varnothing\) for all \(i\) and
\(A_i\cap B_j\ne\varnothing\) for all \(i\ne j\).
This notion goes back to Bollob\'as~\cite{Bol65}.
Related set-pair systems and intersection problems have been studied in~\cite{Tuz87,KNPV12,HF24,FW24}.

Tuza~\cite{Tuz87} studied set-pair systems satisfying the weaker condition that
\(A_i\cap B_i=\varnothing\)
for all \(i\), and for all \(i\ne j\),
\[
A_i\cap B_j\ne\varnothing
\qquad\text{or}\qquad
A_j\cap B_i\ne\varnothing .
\]
Following Kir\'aly, Nagy, P\'alv\"olgyi and Visontai~\cite{KNPV12}, we call such a family a {\it weakly cross-intersecting set-pair system}.
Clearly, every Bollob\'as system is weakly cross-intersecting.
Engel~\cite{Eng96} observed that interval packings in \(\cP_{n;p,n-q}\) can be encoded as weakly cross-intersecting set-pair systems.
Indeed,
a maximal interval \([X,Y]\) in \(\cP_{n;p,n-q}\) corresponds to the ordered pair
$(X,Y')$, where $Y'=[n]\setminus Y$.
Then
\[
|X|=p,\qquad |Y'|=q \qquad\text{and}\qquad X\cap Y'=\varnothing.
\]
Moreover, two maximal intervals \([X_1,Y_1]\) and \([X_2,Y_2]\) are disjoint if and only if
\[
X_1\cap Y'_2\ne\varnothing
\qquad\text{or}\qquad
X_2\cap Y'_1\ne\varnothing .
\]
Using weakly cross-intersecting set-pair systems,
Kir\'aly et al.~\cite{KNPV12} settled a question of Engel by proving that
$
\nu_{n;2,n-2}=10,
$
for all $n\ge 5$.

Let \(p,q\) be positive integers and let \(N=p+q+\left\lfloor\frac qp\right\rfloor\). 
Then \(N\ge p+(p+1)\left\lfloor\frac qp\right\rfloor\). 
Applying Theorem~\ref{thm:main} with \(\ell=p\), \(r=\left\lfloor\frac qp\right\rfloor\), and \(n=N\), we get
\[
\nu_{N;p,N-q}=\binom Np
=
\binom{p+q+\lfloor q/p\rfloor}{p}.
\]
For \(n\ge N\), this packing can be lifted from \([N]\) to \([n]\) by adding the elements of \([n]\setminus[N]\) to every upper endpoint. 
The following is immediate.

\begin{cor}\label{cor:set-pair}
	Let \(p,q\) be positive integers. If
	\(n\ge p+q+\left\lfloor\frac qp\right\rfloor,\)
	then
	\[
	\nu_{n;p,n-q}
	\ge
	\binom{p+q+\left\lfloor q/p\right\rfloor}{p}.
	\]
\end{cor}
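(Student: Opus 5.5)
The plan is to take a packing on the smaller ground set $[N]$ and lift it to $[n]$ by padding every upper endpoint with the new elements. Set $N=p+q+\lfloor q/p\rfloor$, $\ell=p$ and $r=\lfloor q/p\rfloor$.

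First I check that Theorem~\ref{thm:main} applies to the triple $(N,\ell,r)$. Since $p\lfloor q/p\rfloor\le q$, we get
\[
(\ell+1)r+\ell=p+r+pr\le p+r+q=N.
\]
Also $\ell+r=p+\lfloor q/p\rfloor=N-q$. Hence Theorem~\ref{thm:main} gives $\binom Np$ pairwise disjoint maximal intervals $[A_i,B_i]$ in $\cP_{N;p,N-q}$, with $|A_i|=p$ and $|B_i|=N-q$. The case $r=0$, i.e.\ $q<p$, is also covered by the theorem (the intervals are singletons), so no separate argument is needed.

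Next I lift the packing. Let $R=[n]\setminus[N]$, which makes sense because $n\ge N$. Consider the intervals $[A_i,B_i\cup R]$ in $\cP_{n;p,n-q}$. They are maximal, since $|A_i|=p$ and $|B_i\cup R|=N-q+n-N=n-q$.

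The only real step is disjointness, which I would check through the set-pair encoding above. The complement of $B_i\cup R$ in $[n]$ is $[N]\setminus B_i=:B_i'$, which is the same as the complement used in $[N]$. Two maximal intervals are disjoint if and only if $A_i\cap B_j'\ne\varnothing$ or $A_j\cap B_i'\ne\varnothing$. This condition involves only the pairs $(A_i,B_i')$, which are unchanged by the lift. So disjointness in $[N]$ transfers directly to $[n]$.

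Alternatively, one can argue directly. A common element $H$ of $[A_i,B_i\cup R]$ and $[A_j,B_j\cup R]$ would give $H\cap[N]$ in both $[A_i,B_i]$ and $[A_j,B_j]$, because $A_i,A_j\subseteq[N]$. Its size lies between $p$ and $N-q$, so it is a genuine element of both original intervals, which is a contradiction.

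This yields $\binom Np$ pairwise disjoint maximal intervals in $\cP_{n;p,n-q}$, which proves Corollary~\ref{cor:set-pair}. I expect no real obstacle here. The only care needed is the arithmetic check that $N$ meets the hypothesis of Theorem~\ref{thm:main} and that $\ell+r=N-q$.
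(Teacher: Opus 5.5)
Your proposal is correct and follows essentially the same route as the paper: apply Theorem~\ref{thm:main} with $\ell=p$, $r=\lfloor q/p\rfloor$ and $n=N=p+q+\lfloor q/p\rfloor$, using $p\lfloor q/p\rfloor\le q$, and then lift to $[n]$ by adding $[n]\setminus[N]$ to every upper endpoint. Your explicit check that disjointness survives the lift, whether via the set-pair encoding or via $H\cap[N]$, fills in a step the paper calls immediate.
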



This gives a lower bound for the maximum size of weakly cross-intersecting set-pair systems
with \(|A_i|=p\) and \(|B_i|=q\), and it is related to Engel's result~\cite[Theorem 6(b)]{Eng96}.

\subsection{Further work}
Engel~\cite{Eng96} singled out the three-level case \(u-\ell=2\).
Here each maximal interval 
$
[A,B]
$
consists of one \(\ell\)-set $A$, two \((\ell+1)\)-sets, and one \((\ell+2)\)-set $B$. 
Therefore
\[
\nu_{n;\ell,\ell+2}
\le
\min\left\{
\binom n\ell,\,
\left\lfloor\frac12\binom n{\ell+1}\right\rfloor,\,
\binom n{\ell+2}
\right\}.
\]
Theorem~\ref{thm:main} shows that
\[
\nu_{n;\ell,\ell+2}=\binom n\ell
\qquad\text{when}\quad
n\ge 3\ell+2.
\]
By taking complements and replacing $\ell$ by $n-\ell-2$, it also gives the opposite range
\[
\nu_{n;\ell,\ell+2}=\binom n{\ell+2}
\qquad\text{when}\quad
2n\le 3\ell+4.
\]
Thus the remaining difficulty lies in the central range.


\begin{prob}
Determine \(\nu_{n;\ell,\ell+2}\) in the remaining central range $(3\ell+4)/2< n< 3\ell+2$. In particular, is it always true that
\[
\nu_{n;\ell,\ell+2}
=
\min\left\{
\binom n\ell,\,
\left\lfloor\frac12\binom n{\ell+1}\right\rfloor,\,
\binom n{\ell+2}
\right\}?
\]
\end{prob}

The present paper focuses on the Boolean lattice.
It would be interesting to study analogous problems for consecutive ranks of other classical ranked posets, 
such as subspace lattices, divisor lattices, and partition lattices.


\newpage
\section{Appendix}
The proof of Theorem~\ref{thm:main} gives the following recursive construction. 
For each \(A\in\binom{[n]}{\ell}\), the algorithm outputs an \((\ell+r)\)-set \(B\supseteq A\), and the intervals
$
[A,B]
$
are precisely those constructed in the proof.

\begin{algorithm}[H]
	\caption{Construction of \(B\)}
	\label{alg:construction}
	\begin{algorithmic}[1]
		\Require Integers \(n\ge 1, \ell,r\ge 0\) with \(n\ge (\ell+1)\,r+\ell\), and an \(\ell\)-set \(A\subseteq[n]\).
		\Ensure An \((\ell+r)\)-set \(B\supseteq A\).
		
		\If{\(r=0\)}
		\State \Return \(A\)
		\EndIf

		\If{\(\ell=0\)}
		\State \Return \(\{1,2,\ldots,r\}\)
		\EndIf
		
		\If{\(n=(\ell+1)\,r+\ell\)}
		\State Endow the ground set \([n]=\{1,2,\ldots,n\}\) with the cyclic order.
		\For{\(1\le i\le n\)}
		\State Define \(w(i)\) by
		\Statex
		\[
		w(i)=
		\begin{cases}
			-r, & \qquad i\in A,\\
			1, & \qquad i\notin A.
		\end{cases}
		\]
		\EndFor
		\State Set \(s(0)=0\) and \(s(i)=\sum_{j=1}^{i}w(j)\) for \(1\le i\le n\).
		\State Let \(M=\max_{0\le i\le n}s(i)\).
		\For{\(m=M-r+1,M-r+2,\ldots,M\)}
		\State Let \(i_m\) be the first index such that \(s(i_m)=m\).
		\EndFor
		\State Set
		\Statex
		\[
		C_A=\{i_m: M-r+1\le m\le M\}.
		\]
		\State \Return \(A\cup C_A\)
		\EndIf
		
		\If{\(n>(\ell+1)\,r+\ell\)}
		\If{\(n\notin A\)}
		\State Regard \(A\) as an \(\ell\)-set in \([n-1]\).
		\State \Return the output of Algorithm~\ref{alg:construction} with input \((n-1,\ell,r,A)\).
		\Else
		\State Write \(A=\{n\}\cup A'\), where \(A'\in\binom{[n-1]}{\ell-1}\).
		\State Let \(B'\) be the output of Algorithm~\ref{alg:construction} with input \((n-1,\ell-1,r,A')\).
		\State \Return \(\{n\}\cup B'\)
		\EndIf
		\EndIf
	\end{algorithmic}
\end{algorithm}
\end{document}